\documentclass[11pt]{amsart}
\usepackage[utf8]{inputenc}
\usepackage{amsmath,amssymb}
\usepackage{wrapfig}
\usepackage{url}
\usepackage{mathtools}
\usepackage{graphicx}
\usepackage{stmaryrd}
\usepackage{amsthm}
\usepackage{xcolor}
\usepackage[colorlinks=true,linkcolor=blue,citecolor=blue]{hyperref}
\usepackage[shortlabels]{enumitem}
\usepackage{comment}
\usepackage{relsize}
\usepackage{dsfont}
\usepackage{stmaryrd}
\usepackage{geometry}
\usepackage{setspace}
\usepackage{mathrsfs} 
\makeatletter
\renewcommand{\author}[2][]{%
  \def\@tempa{#1}
  \ifx\@empty\authors
    \ifx\@tempa\@empty
      \gdef\shortauthors{#2}%
    \else
      \gdef\shortauthors{#1}%
    \fi
    \gdef\authors{\author{#2}}%
  \else
    \ifx\@tempa\@empty
      \g@addto@macro\shortauthors{\and#2}%
    \else
      \g@addto@macro\shortauthors{\and#1}%
    \fi
    \g@addto@macro\authors{\and\author{#2}}%
  \fi
}
\renewcommand{\address}[2][]{\g@addto@macro\authors{\address{#1}{#2}}}
\def\@setauthors{%
  \begin{center}%
    \footnotesize
    \vspace{20pt}
    \let\and\@empty
    \def\author##1{\advance\@tempcnta\@ne}%
    \def\address##1##2{\advance\@tempcntb\@ne}%
    \@tempcnta=\z@  \@tempcntb=\z@
    \authors
    \ifnum\@tempcnta>\@ne \ifnum\@tempcntb=\@ne
        \oneaddress
      \else
        \sepaddresses
      \fi
    \else
      \oneaddress
    \fi
  \end{center}%
}
\def\oneaddress{%
  \begingroup
  \let\author\@iden \let\address\@gobbletwo
  \renewcommand{\andify}{%
    \nxandlist{\unskip, }{\unskip{} and~}{\unskip, and~}}%
  \uppercasenonmath\authors
  \andify\authors
  \authors
  \endgroup
  \begingroup \let\and\relax \let\author\@gobble
  \def\address##1##2{\unskip\\[10pt] \itshape##2}%
  \authors
  \endgroup
}
\def\sepaddresses{%
  \begingroup
    \baselineskip10\p@\relax
    \def\address##1##2{ ({\itshape##2}\/)}
    \def\author##1{\def\temp{##1}\leavevmode\uppercasenonmath\temp\temp}%
    \nxandlist
      {,\\[\baselineskip]}
      {\\[\baselineskip] \textsc{\lowercase{and}}\\[\baselineskip]}
      {,\\[\baselineskip]\textsc{\lowercase{and}}\\[\baselineskip]}
      \authors 
    \authors
  \endgroup
}
\def\maketitle{\par
  \@topnum\z@
  \@setcopyright
  \thispagestyle{firstpage}%
  \uppercasenonmath\shorttitle
  \ifx\@empty\shortauthors \let\shortauthors\shorttitle
  \else
    \newcommand{\@xuppercasenonmath}[1]{\toks@\@emptytoks
      \@xp\@skipmath\@xp\@empty##1$$%
      \edef##1{\@nx\protect\@nx\@upprep\the\toks@}}%
    \@xuppercasenonmath\shortauthors
    \def\@@and{AND}
    \renewcommand{\andify}{%
      \nxandlist{\unskip, }{\unskip{ }\@@and{ }}{\unskip, \@@and{ }}}%
    \andify\shortauthors
  \fi
  \@maketitle@hook
  \begingroup
  \@maketitle
  \endgroup
  \c@footnote\z@
  \@cleartopmattertags
}
\def\@maketitle{%
  \normalfont\normalsize
  \let\@makefntext\noindent
  \@adminfootnotes
  \ifx\@empty\addresses\else \@footnotetext{\@setotheraddresses}\fi
  \global\topskip68\p@\relax
  \@settitle
  \ifx\@empty\authors \else \@setauthors \fi
  \ifx\@empty\@dedicatory
  \else
    \baselineskip26\p@
    \vtop{\centering{\footnotesize\itshape\@dedicatory\@@par}%
      \global\dimen@i\prevdepth}\prevdepth\dimen@i
  \fi
  \toks@\@xp{\shortauthors}\@temptokena\@xp{\shorttitle}%
  \edef\@tempa{\@nx\markboth{\the\toks@}{\the\@temptokena}}\@tempa
  \@setabstract
  \normalsize
  \if@titlepage
    \newpage
  \else
    \dimen@34\p@ \advance\dimen@-\baselineskip
    \vskip\dimen@\relax
  \fi
} 
\renewcommand{\thanks}[1]{%
  \ifx\@empty\thankses
    \gdef\thankses{\thanks{#1}}%
  \else
    \g@addto@macro\thankses{\endgraf\thanks{#1}}%
  \fi}
\def\@setthanks{\def\thanks##1{\noindent##1\@addpunct.}\thankses}
\renewcommand{\curraddr}[2][]{%
  \ifx\@empty\addresses
    \gdef\addresses{\curraddr{#1}{#2}}%
  \else
    \g@addto@macro\addresses{\endgraf\curraddr{#1}{#2}}%
  \fi}
\renewcommand{\email}[2][]{%
  \ifx\@empty\addresses
    \gdef\addresses{\email{#1}{#2}}%
  \else
    \g@addto@macro\addresses{\endgraf\email{#1}{#2}}%
  \fi}
\renewcommand{\urladdr}[2][]{%
  \ifx\@empty\addresses
    \gdef\addresses{\urladdr{#1}{#2}}%
  \else
    \g@addto@macro\addresses{\endgraf\urladdr{#1}{#2}}%
  \fi}
\def\@setotheraddresses{%
  \def\curraddr##1##2{\noindent
    \emph{Current address\@ifnotempty{##1}{ of ##1}}:\space
      ##2\@addpunct.}%
  \def\email##1##2{\noindent
    \emph{E-mail address\@ifnotempty{##1}{ of ##1}}:\space
      \texttt{##2}}%
  \def\urladdr##1##2{\noindent
    \emph{WWW address\@ifnotempty{##1}{ of ##1}}:\space
      \texttt{##2}}%
  \addresses
}
\let\enddoc@text\relax
\makeatother

\geometry{a4paper, left=2.54cm, right=2.54cm, bottom=2.54cm, top=2.54cm}
\setlength{\parskip}{0.5\baselineskip}
\newtheorem{theorem}{Theorem}
\numberwithin{theorem}{section} 
\numberwithin{equation}{section}
\newtheorem{corollary}{Corollary}
\numberwithin{corollary}{section} 

\numberwithin{prop}{section} 
\newtheorem{remark}{Remark}
\numberwithin{remark}{section} 
\newtheorem{example}{Example}
\numberwithin{example}{section} 

\newtheorem{lemma}{Lemma}
\numberwithin{lemma}{section}
\theoremstyle{definition}

\theoremstyle{definition}

\numberwithin{definition}{section} 
\DeclareMathOperator{\R}{\mathbb{R}}
\DeclareMathOperator{\Z}{\mathbb{Z}}
\DeclareMathOperator{\N}{\mathbb{N}}

\DeclareMathOperator{\Tr}{Tr}

\setlength\parindent{24pt}
\title{Global hypoellipticity of $G$-invariant operators on homogeneous vector bundles}

\author[D. Cardona]{Duv\'an Cardona}
\address{
  Department of Mathematics: Analysis, Logic and Discrete Mathematics
  \endgraf
  Ghent University, Belgium
  }
  \email{duvan.cardonasanchez@ugent.be}
\author[A. Kowacs]{André Pedroso Kowacs}
\address{
  Department of Mathematics
  \endgraf
 Universidade Federal do Paraná (UFPR), Brazil
  }
\email{andrekowacs@gmail.com}
\thanks{The authors are supported  by the FWO  Odysseus  1  grant  G.0H94.18N:  Analysis  and  Partial Differential Equations and by the Methusalem programme of the Ghent University Special Research Fund (BOF)
(Grant number 01M01021). Duv\'an Cardona has been supported by the FWO Fellowship
grant No 1204824N of the Belgian Research Foundation FWO. Andr\'e Kowacs was supported in part by the Coordenação de Aperfeiçoamento de Pessoal de N\'ivel Superior - Brasil (CAPES) - Finance Code 001”;}
 
\subjclass{Primary: 22E30, 43A77. Secondary: 58J40}

\keywords{Global hypoellipticity. Homogeneous operator. Compact Lie group. Homogeneous vector bundle}

\date{\today}

\begin{document}
\maketitle
\allowdisplaybreaks
\begin{abstract}
We establish necessary and sufficient conditions for the global hypoellipticity of $G$-invariant operators on homogeneous vector bundles. These criteria are established in terms of the corresponding matrix-valued symbols as developed by Ruzhansky and Turunen and extended in \cite{HomoVector} to homogeneous vector-bundles.  
\end{abstract}

\section{Introduction}
\subsection{Outline} The aim of this paper is to apply the matrix-valued quantisation in \cite{HomoVector} on homogeneous vector bundles over compact homogeneous manifolds to the characterisation of hypoellitic $G$-invariant operators, extending the results in \cite{Kirilov_2020}, see Subsection \ref{Subsection:hom} for details.  Since the notion of {\it hypoellipticity} is the main topic of this paper we present this definition as follows. Let $M$ be a compact manifold without boundary. By following H\"ormander \cite{Hormander:1961,Hormander:1967}, a differential operator $D$ with $C^\infty$-coefficients acting on the space of distributions  $\mathscr{D}'(M)$  is called {\it globally hypoelliptic} if the equation 
\begin{equation}
    Du=f,
\end{equation}only has solution $u\in C^\infty(M)$ when $f\in C^\infty(M).$ This definition is easily extended when $D$ is a pseudo-differential operator acting on smooth sections $\Gamma^\infty(E)$ of a vector-bundle $E$ over $M.$ In the case where $M=G/K$ is a compact homogeneous manifold and $E$ is a $G$-vector bundle on $M,$
this work aims to investigate necessary and sufficient conditions for  the global hypoellipticity of $G$-invariant pseudo-differential operators acting on smooth sections of $E.$ In view of the characterisation of hypoelliptic differential operators with constant coefficients, see \cite{Hormander1985III}, where criteria can be obtained in terms of the {\it principal symbol} of the operator, here we adopt a new perspective and our criteria are developed in terms of the (global) {\it matrix-valued symbols} as developed by Ruzhansky and Turunen \cite{Ruz}. The  global hypoellipticity property for pseudo-differential operators  has been widely studied, see for instance \cite{BergamascoGH,BergamascoKirilovGH,PsudoGHKirilov,GF1,GF2,GFremarks,Kirilov_2020,KirilovTS,KirilovCOMP,PseudoTorus}. The main results of this note are presented in Section \ref{Main:results}.

\subsection{State-of-the-art} The hypoellipticity of $G$-invariant pseudo-differential operators $D$ is considered in the setting of $G$-vector bundles. The research about the hypoellipticity of a differential operator with a principal symbol having
real-valued coefficients and constant signs has been a field of intensive research, where typically,  the following two methods are applied. The
first approach is based on an {\it a priori estimate} of solutions in the scale of the Sobolev spaces. Once one finds such
an inequality, one can prove the hypoellipticity of the operator with the aid of interpolation inequalities or with the application of the theory of pseudo-differential operators. The second approach is based on the study of fundamental 
solutions, which is a very well-known difficult problem, in particular if the operator has variable coefficients. For a general analysis about the hypoellicity of linear partial differential operators we refer to H\"ormander \cite{Hormander1985III}. 

As it is well-known, the extensive class of elliptic operators provides fundamental examples of hypoelliptic operators. This fact suggests that the principal
symbol of hypoelliptic operators has a kind of positivity. In 1967, H\"ormander \cite{Hormander:1967} proved the hypoellipticity of second order differential operators 
when 
the principal symbol does not change of sign when the dual variables vary. The following is a natural question that arose after H\"ormander's work \cite{Hormander:1967}

{\it “Is it necessary for hypoellipticity
that the principal symbol does not change sign when the space variables vary?}  

In 1971,
Kannai \cite{Kannai}  proved that $$L_1 = \partial_t + t\partial_{x}^2,$$ acting in smooth functions on $\mathbb{R}^2$ is hypoelliptic, while
$$L_2 =\partial_t - t\partial_{x}^2$$ is not, where the key remark is that 
 $L_1$ and $L_2$ are typical examples of operators with sign-changing
principal symbols. This example illustrates that the semi-definiteness of the principal symbol is
not necessary for hypoellipticity and that the type of changing sign is important. The sign
of the principal symbol of $L_2$ above changes from minus to plus as $t$ increases. This is a
condition similar to the Nirenberg-Treves criterion for local solvability of differential equations
of principal type, see \cite{NirenbergTreves63}. In 1976, Beals and Fefferman \cite{Beals:Fefferman} extended  Kannai’s result to higher dimensional cases. Their result was obtained by obtaining  a suitable  {\it a priori
estimate with weight} and using the general theory of pseudo-differential operators due to
Beals \cite{Beals,Beals2}. An important characteristic of this method is that the hypoellipticity property follows from a
single a priori estimate. However, there are many restrictions on the weight.
Therefore, the class of functions that control the sign of the principal symbol is strictly
limited. Beals and Fefferman’s result was extended by many authors, see e.g. Kumano-go and
Taniguchi \cite{Kumano-go:Taniguchi73} and Lanconelli \cite{Lanconelli}. We also refer to the seminal 1987's work of Morimoto \cite{Morimoto}. As we will show in this note, the hypoellipticity of $G$-invariant pseudo-differential operators can be effectively analysed by applying the {\it matrix-valued quantisation formula} \eqref{vectorquantisation}. Indeed, this is a new point of view in counterpart to the aforementioned results based on the notion of {\it principal symbol}.

 In the setting of a compact Lie group $G$, pseudo-differential operators can be analysed in terms of the corresponding  global matrix-valued symbols as introduced by Ruzhansky and Turunen \cite{Ruz}. Indeed, a pseudo-differential operator $A$ can be writen in terms of the group Fourier transform $\hat{}$ as 
 \begin{equation*}
    Af(x) = \sum_{[\xi]\in\widehat{G}}d_\xi\Tr\left(\xi(x)\sigma_A(x,\xi)\widehat{f}(\xi)\right),
\end{equation*}where the sum runs over the elements of the unitary dual $\widehat{G}$ of $G,$ namely, the set formed by all equivalence classes of continuous irreducible unitary representation of the group $[\xi],$ see Subsection \ref{Preliminaries} for details. Ruzhansky-Turunen's matrix-valued symbols can be extended also in the setting of homogeneous vector bundles, see Section \ref{Preliminaries}. 

While in the standard theory of pseudo-differential operators on manifolds many criteria for the properties of the operator depend on the principal symbol, on compact Lie groups, properties of the operator involve the (full) matrix-valued symbol.  We refer the reader to \cite{CR20} and the extensive list of references therein that include matrix-valued symbol criteria for several properties of pseudo-differential operators: boundedness of pseudo-differential oeprators on $L^p$ spaces, on  $L^p$-Sobolev spaces, on Triebel-Lizorkin spaces, on Besov spaces, etc., as well as applications of the matrix-valued symbols to the analysis of elliptic and parabolic equations, G\r{a}rding type inequalities and global well-posedness of pseudo-differential problems. We remark that as for the analysis of the hypoellipticity of homogeneous differential operators on homogeneous  vector-bundles, the main motivation comes from the index-type theorems in this setting, see the seminal work of Bott \cite{Bott1965} for details. 

\subsection{Structure of the note}
We organise this note as follows. 
In Section \ref{Preliminaries} we introduce the notation used in this work.
In Section \ref{Main:results} we state our main results, present their proofs, and finally provide some examples.

\section{Preliminaries}\label{Preliminaries}

In this section, we introduce the notations  for the development of this work. We first present the basics of representation theory on compact groups, which can be found in more detail in \cite[Chapter 7]{Ruz}.

\subsubsection{The group Fourier transform}
Let $G$ be a compact Lie group. We denote by $\widehat{G}$ its unitary dual. As every $[\xi]\in\widehat{G}$ is finite dimensional, we can always choose a representative that is matrix-valued. As a consequence of the Peter-Weyl Theorem, the collection of the coefficient functions of all such matrices is an orthogonal basis for $L^2(G)$, where integration is taken with respect to the Haar measure in $G$. Then, define the matrix-valued Fourier coefficients of $f\in L^2(G)$ by 
\begin{equation*}
    \widehat{f}(\xi) \doteq \smallint_G f(x)\xi(x)^*dx,
\end{equation*}
for $[\xi]\in\widehat{G}$, and write the Fourier inversion formula as
\begin{equation*}
    f(x) = \sum_{[\xi]\in\widehat{G}}d_\xi\Tr\left(\xi(x)\widehat{f}(\xi)\right),\,\forall x\in G,
\end{equation*}
where $d_\xi\doteq \dim (\xi)$.
In the case where $G$ is a compact Lie group, the coefficient functions of elements of $\widehat{G}$ are smooth, so we can extend this definition to the set of distributions $\mathcal{D}'(G)$ by 
\begin{equation*}
    \widehat{u}(\xi) = \langle u,\xi^*\rangle,
\end{equation*}
for $u\in\mathcal{D}'(G)$ and $[\xi]\in\widehat{G}$, where this evaluation is to be understood coefficient-wise. \\
Now we summarize the quantisation of pseudo-differential operators by matrix-valued symbols, following \cite[Chapter 10]{Ruz}.

\subsubsection{The quantisation formula on compact Lie groups}
Let $G$ now be a compact Lie group. To a continuous linear operator $A:C^\infty(G)\to C^\infty(G)$ we can associate its global matrix-valued symbol $\sigma_A$ by
\begin{equation*}
    \sigma_A(x,\xi) = \xi(x)^*A\xi(x),\,\forall(x,[\xi])\in G\times\widehat{G},
\end{equation*}
so that we can write the quantisation
\begin{equation*}
    Af(x) = \sum_{[\xi]\in\widehat{G}}d_\xi\Tr\left(\xi(x)\sigma_A(x,\xi)\widehat{f}(\xi)\right),\,\forall x\in G.
\end{equation*}
In this setting, left-invariant continuous linear operators are Fourier multipliers characterized by having a matrix-valued symbol which independent on the variable $x\in G$, i.e.: $\sigma_A(x,\xi)=\sigma_A(\xi)$.

\subsubsection{Vector Bundles}
Let $E,X$ be topological spaces, and let $\mathbb{K}$ be a field. Following \cite[Chapter 1]{Wallach}, we say that $E$ is a $\mathbb{K}$-vector bundle over $X$ if there exists a continuous mapping $p:E\to X$ such that
\begin{enumerate}
    \item For every $x\in X$, $E_x\doteq p^{-1}(x)$ is a finite dimensional $\mathbb{K}$-vector space.
    \item For each $x\in X$, there exists a neighbourhood $U$ of $x$ and a homeomorphism $\psi:p^{-1}(U)\to U\times\mathbb{K}^n$ such that $\psi(v)=(p(v),f(v))$, where $f:E_{p(v)}\to \mathbb{K}^n$ is a linear map between vector spaces.
\end{enumerate}
Let $p:E\to X$ be a vector bundle. A continuous map $s:X\to E$ is called a section of $E$ if for all $x\in X$, $p(s(x)) = p(x)$, or equivalently, $s(x)\in E_x$. We denote by $\Gamma (E)$ the set of all sections of $E$. If $X,E$ are smooth manifolds, we also define $\Gamma^\infty(E)$ the set of all {\it{smooth}} sections of $E$. If $X$ is orientable, the space $L^q(E)$, $1\leq q<\infty$, is then defined as the completion of the set of all smooth sections $s\in \Gamma^\infty(E)$ such that
\begin{equation}
\|s\|_{L^q(E)}\doteq\left(\smallint_X\|s(x)\|_{E_x}^q dx\right)^{\frac{1}{q}}<\infty.
\end{equation}
We shall denote by $\mathcal{D}'(E)$ the set of continuous linear functionals over $\Gamma^\infty(E)$, to which we refer as {\it distributions} on $E$. 
\subsubsection{Homogeneous vector bundles}\label{Subsection:hom}
Next, we record some notions about the basic theory of homogeneous vector bundles, following \cite[Chapter 5]{Wallach}. 
Let $G$ be a compact Lie group and $K$ a closed subgroup of $G$. Denote the quotient $M=G/K$ equipped with its natural compact manifold topology. There exists a natural left action of $G$ on $M$ given by $g\cdot hK=ghK$, for every $g,h\in G$. We say that a $\mathbb{C}$-vector bundle (or $\mathbb{R}$-vector bundle) $p:E\to M$ is a homogeneous vector bundle over $M$ if $G$ acts on $E$ on the left and this action satisfies:
\begin{enumerate}
    \item $g\cdot E_x = E_{gx}$, for all $x\in M$, $g\in G$.
    \item The previously induced mappings from $E_x$ to $E_{gx}$ are linear.
\end{enumerate}
There is natural left action of $G$ on $\Gamma(E)$, $G\times \Gamma(E)\to \Gamma(E)$ given by
\begin{equation*}
    (g\cdot s)(x) = g\cdot s(g^{-1}x),
\end{equation*}
for all $x\in X$, $g\in G$.
Consider $p_1:E\to M$, $p_2:F\to M$, $M=G/K$ to be homogeneous vector bundles, where $K<G$ are compact Lie groups. Let $E_0=p_1^{-1}(K)$, $F_0=p_2^{-1}(K)$ be the fibers at the identity coset. For each $\tau\in\text{Hom}(K,\text{End}(E_0))$, there is a natural right action of $K$ on $G\times E_0$ by $(g,v) = (gk,\tau^{-1}(k)v)$. We denote by $G\times_{\tau} E_0$ the quotient $(G\times E_0)/K$ under this action. Similarly, we define $G\times_{\omega} F_0$. One can show that there always exist $\tau\in\text{Hom}(K,\text{End}(E_0))$, $\omega\in\text{Hom}(K,\text{End}(F_0))$ such that 
\begin{equation*}
    E\cong G\times_{\tau} E_0,\qquad F\cong G\times_{\omega} F_0,
\end{equation*}
are isomorphic as homogeneous vector bundles.\\
Now define $C^{\infty}(G,E_0)^{\tau}\subset C^{\infty}(G,E_0)$ by
\begin{equation}
    C^{\infty}(G,E_0)^{\tau}=\{f\in C^\infty(G,E_0)|\forall g\in G,\forall k\in K, f(gk)=\tau(k)^{-1}f(g)\}.
\end{equation}
In a similar way we define $L^q(G,E_0)^{\tau}$ for all $q\geq 1$ and for the homogeneous vector bundle $p_2:F\to M$. 
\begin{remark}
    As in \cite[Chapter 5]{Wallach}, consider the map $\chi_{\tau}:\Gamma^\infty(E)\to C^{\infty}(G,E_0)^{\tau}$, given by
    \begin{equation}
        \chi_\tau(s)(g)\doteq g^{-1}\cdot s(gK)\equiv (g^{-1}\cdot s)(e_GK)
    \end{equation}
    where $e_G\in G$ is the group identity. This mapping extends to a surjective isometry from $L^2(E)$ to $L^2(G,E_0)^{\tau}$, so that we may identify $\Gamma^\infty(E)\cong C^{\infty}(G,E_0)^{\tau}$.
\end{remark}
\noindent Let $\tilde{D}:\Gamma(E)\to\Gamma(F)$ be a linear operator. We say that $\tilde{D}$ is $G$-invariant (or homogeneous) if 
\begin{equation*}
    \tilde{D}(g\cdot s)=g\cdot(\tilde{D}s),
\end{equation*}
for all $s\in \Gamma(E),\, g\in G$.\\
As mentioned in \cite[Page 120]{Wallach}, every continuous linear map $\tilde{A}:\Gamma^\infty(E)\to\Gamma^\infty(F)$ induces a continuous linear map $A: C^{\infty}(G,E_0)^{\tau}\to C^{\infty}(F,F_0)^{\omega}$ by 
\begin{equation}\label{AtildeA}
    A \doteq \chi_\omega\circ \tilde{A}\circ\chi_{\tau}^{-1}.
\end{equation}

\begin{remark}
    In the case where $\tilde{D}:\Gamma^\infty(E)\to\Gamma^\infty(F)$ is a $G$-invariant (homogeneous) continuous linear map, the mapping $D:C^{\infty}(G,E_0)^{\tau}\to C^{\infty}(F,F_0)^{\omega}$ is a vector-valued Fourier multiplier (as explained below).
\end{remark}

\subsubsection{The quantisation formula on homogeneous vector-bundles}
Next we define the vector valued group Fourier transform, following \cite{HomoVector}, in order to obtain a vector valued quantisation formula. Then, we use the previous identifications to obtain a quantisation formula for homogeneous operators. Choose orthonormal basis $B_{E_0}=\{e_{i,E_0}\}_{i=1}^{d_\tau}$ and $B_{F_0}=\{e_{i,F_0}\}_{i=1}^{d_\omega}$ for $E_0$ and $F_0$ respectively. Denoting by $e_{i,E_0}^*(v)\doteq \langle v,e_{i,E_0}\rangle_{E_0}$, $e_{i,F_0}^*(w)\doteq \langle w,e_{i,F_0}\rangle_{F_0}$, for $v\in E_0$, $w\in F_0$, the vector valued $B_{E_0}$-Fourier transform of $f\in C^{\infty}(G,E_0)$ is given by 
\begin{equation*}
    \widehat{f}(i,\xi) \doteq \smallint_G e_{i,E_0}^*(f(x))\xi(x)^*dx,\,\forall[\xi]\in\widehat{G},\,\forall1\leq i\leq \dim(E_0)=d_\tau.
\end{equation*}
As the coefficient functions of elements of $\widehat{G}$ are smooth, we can extend this definition to the set of distributions $\mathcal{D}'(G,E_0)$ by 
\begin{equation*}
    \widehat{u}(i,\xi) = \langle u,\xi^*\otimes e_{i,E_0}\rangle
\end{equation*}
for $u\in\mathcal{D}'(G,E_0)$ and $[\xi]\in\widehat{G}$, where this evaluation is to be understood coefficient-wise.
From the Peter-Weyl Theorem, the inversion formula can be written as
\begin{equation*}
    f(x) = \sum_{i=1}^{d_\tau}\sum_{[\xi]\in\widehat{G}}d_\xi\Tr\left(\xi(x)\widehat{f}(i,\xi)\right)e_{i,E_0},\,\forall x\in G.
\end{equation*} 
Let $A:C^\infty(G,E_0)\to C^\infty(G,F_0)$ be a continuous linear operator. For $1\leq i\leq d_\tau=\dim(E_0)$, $1\leq r\leq d_\omega=\dim(F_0)$, define the matrix symbol
\begin{equation}\label{vectormatrixsymbol}
    \sigma_A(i,r,x,\xi)\doteq \xi(x)^*[e^*_{r,F_0}[A(\xi_{uv}\otimes e_{r,E_0})(x)]]_{u,v=1}^{d_\xi}
\end{equation}
where $x\in G$, $\xi\in\widehat{G}$. The matrix-valued $(B_{E_0},B_{F_0})$-symbol of $A$ is then defined as the mapping $\sigma_A:\{1\leq i\leq d_\tau\}\times\{1\leq i\leq d_\omega\}\times G\times \widehat{G}\to\bigcup \left\{\mathbb{C}^{d_\xi\times d_\xi}|[\xi]\in\widehat{G}\right\}$ given by \eqref{vectormatrixsymbol}.
One can prove that the quantisation formula can then be written as
\begin{align}\label{vectorquantisation}
    Af(x) = \sum_{r=1}^{d_\omega}\sum_{i=1}^{d_\tau}\sum_{[\xi]\in\widehat{G}}d_\xi\Tr\left(\xi(x)\sigma_A(i,r,x,\xi)\widehat{f}(i,\xi)\right)e_{i,F_0}.
\end{align}
Now let $\tilde{A}:\Gamma^\infty(E)\to\Gamma^\infty(F)$ be a continuous linear operator. Then by \eqref{AtildeA} we can define the matrix-valued $(B_{E_0},B_{F_0})$-symbol of $\tilde{A}$ as $\sigma_{\tilde{A}}=\sigma_A$. It follows from $\eqref{vectorquantisation}$ that in this case, the quantisation formula can be written as
\begin{align}\label{bundlequantisation}
     \tilde{A}s(gK) &= \chi_{\omega}^{-1}\left(\sum_{r=1}^{d_\omega}\sum_{i=1}^{d_\tau}\sum_{[\xi]\in\widehat{G}}d_\xi\Tr\left(\xi(x)\sigma_{\tilde{A}}(i,r,g,\xi)\widehat{\chi_\tau s}(i,\xi)\right)e_{i,F_0}\right)\notag\\
     &\equiv\left(g,\sum_{r=1}^{d_\omega}\sum_{i=1}^{d_\tau}\sum_{[\xi]\in\widehat{G}}d_\xi\Tr\left(\xi(x)\sigma_{\tilde{A}}(i,r,g,\xi)\widehat{\chi_\tau s}(i,\xi)\right)e_{i,F_0}\right)\cdot K,
\end{align}
for every $gK\in M=G/K$, and every section $s\in\Gamma^{\infty}(E)$.
\begin{remark}
    Notice that these definitions all depend on the choice of basis $B_{E_0}$ and $B_{F_0}$ of $E_0$ and $F_0$, respectively. In fact, if $B'_{E_0}$
    $B'_{F_0}$
    is another choice of respective basis, then if we denote by $U_E\in End(E_0)$, $U_F\in End(F_0)$ the corresponding unitary change of basis operators, then for every continuous linear operator $A:C^\infty(G,E_0)\to C^\infty(G,F_0)$ we have
    \begin{equation*}
        \sigma_A(i,r,x,\xi)=\sigma_{U_FAU_E^*}(i,r,x,\xi),
    \end{equation*}
    where on the left we consider the $(B_{E_0},B_{F_0})$-symbol of ${A}$ and on the right we consider the $(B_{E_0}',B_{F_0}')$-symbol of $U_FAU^*_E$.
\end{remark}

 \section{Main Results}\label{Main:results}
In this section we present the contributions of this note. We record the following fundamental notations/facts:
\begin{itemize} 
    \item For every $s\in\mathbb{R},$ the Sobolev space $H^s(G)\subset\mathcal{D}'(G)$ is defined by 
\begin{equation*}
    H^s(G)=\{u\in\mathcal{D}'(G)| \|u\|_{H^s(G)}<+\infty\}
\end{equation*}
where 
\begin{equation*}
    \|u\|_{H^s(G)}=\left(\sum_{[\xi]\in\widehat{G}}d_\xi\langle\xi\rangle^{2s}\|\widehat{u}(\xi)\|_{HS}^2\right)^{\frac{1}{2}}.
\end{equation*}
Here, $\langle\xi\rangle = \sqrt{1+\nu_\xi}$ is the common eigenvalue of $(\text{Id}+\mathcal{L}_G)^{\frac{1}{2}}$ corresponding to the coefficient functions of $[\xi]\in\widehat{G}$, where $\mathcal{L}_G$ is the positive Laplace-Beltrami operator on $G$. Also, $\|A\|_{HS}=\sqrt{\Tr(A^*A)}=\sqrt{\sum_{i,j}A_{ij}^2}$ for any square matrix $A$.
\item For any matrix $A$, define $\lambda_{\min}[A]\geq 0$ its smallest {\it singular value}.
\item We shall also use the following well-known characterizations:
\begin{equation}\label{eqSobolevEmbed}
    \mathcal{D}'(G) = \bigcup_{s\in\mathbb{R}}H^s(G),\qquad C^{\infty}(G)=\bigcap_{s\in\mathbb{R}}H^s(G).
\end{equation}
\item The following fact is a consequence of the Weyl eigenvalue counting formula for the Laplacian, see \cite[Chapter 5]{Wallach}:
$$ \sum_{[\xi]\in\widehat{G}} d_\xi^2\langle\xi\rangle^{-2t}<\infty \iff t>\frac{\dim(G)}{2}.$$
\item The Sobolev space $H^s(E)$ (and similarly $H^s(F)$) for any $s\in\mathbb{R}$, is defined by the norm  
\begin{equation}\label{eqsobonorm}
    \|u\|_{H^{s}(E)}^2=\sum_{[\xi]\in\widehat{G}}d_\xi\langle \xi\rangle^{2s} \sum_{i=1}^{d_\tau}\|\widehat{\chi_{\tau}u}(i,\xi)\|_{HS}^2.
\end{equation}
\item Distributions on $E$ and smooth sections are also characterised in terms of Sobolev spaces as follows:
\begin{equation}\label{eqSobolevEmbedVect}
    \mathcal{D}'(E) = \bigcup_{s\in\mathbb{R}}H^s(E),\qquad \Gamma^{\infty}(E)=\bigcap_{s\in\mathbb{R}}H^s(E).
\end{equation}
\end{itemize}
\begin{remark}
    It is possible that for all $u\in\mathcal{D}'(E)$, $\widehat{\chi_\tau u}(i,\xi)=0$ for all $1\leq i\leq d_\tau$, for some $[\xi]\in\widehat{G}$. We denote the set of all $[\xi]\in\widehat{G}$ for which this is not the case by $\widehat{G}(E)$. Hence, \eqref{eqsobonorm} can be rewritten as
\begin{equation}\label{eqsobonorm2}
    \|u\|_{H^{s}(E)}^2=\sum_{[\xi]\in\widehat{G}(E)}d_\xi\langle \xi\rangle^{2s} \sum_{i=1}^{d_\tau}\|\widehat{\chi_{\tau}u}(i,\xi)\|_{HS}^2.
\end{equation}
\end{remark}
\begin{remark} In the case $E=G\times_{\widehat{1}}\mathbb{C}$ is the trivial bundle, $\Gamma^\infty(E)$ coincides with $C^\infty(M)$, and $\chi_{\tau}:C^\infty(M)\to C^\infty(G)^K$ is just the projective lifting
\begin{equation*}
    \chi_{\tau}f(g)\equiv\dot{f}(g)\doteq f(gK),\,\forall g\in G.
\end{equation*}
Also in this case, the set $\widehat{G}(E)$ this coincides with the set of all $[\xi]$ such that $\Pi_M\xi=0$, where $\Pi_M$ is the orthogonal projection of $L^2(G)$ onto $L^2(M)$.     
\end{remark}
\subsection{Statements}
First, we record Theorem \ref{theoghcompact} below which provides a necessary and sufficient condition on the symbol of a Fourier multiplier on a  compact Lie group for the operator to be globally hypoelliptic. 
 This result was obtained in \cite[Theorem 3.3]{Kirilov_2020} for differential operators strongly invariant with respect to an elliptic operator on general compact manifolds, but we will prove it for pseudo-differential operators on compact Lie groups. 
 We also refer to \cite{GFremarks} for the main ideas used in the proof.
\begin{theorem}\label{theoghcompact}
    Let $G$ be a compact Lie group. If $D$ is a left invariant pseudo-differential operator (Fourier multiplier) on $G$, 
then $D$ is globally hypoelliptic if and only if there exists $C>0$, $k\in\mathbb{R}$ such that
    \begin{equation}\label{eq1}
        \lambda_{\min}[\sigma_D(\xi)]\geq C \langle\xi\rangle^{k},
    \end{equation}
    for all but a finite number of $[\xi]\in\widehat{G}$.
\end{theorem}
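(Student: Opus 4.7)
The plan is to split the equivalence into sufficiency and necessity, both of which reduce to the Fourier identity $\widehat{Du}(\xi)=\sigma_D(\xi)\widehat{u}(\xi)$ for every $u\in\mathcal{D}'(G)$, valid since $D$ is left-invariant.

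For \emph{sufficiency}, suppose $\lambda_{\min}[\sigma_D(\xi)]\geq C\langle\xi\rangle^k$ outside a finite set $\Sigma\subset\widehat{G}$. Then $\sigma_D(\xi)$ is invertible on the tail with $\|\sigma_D(\xi)^{-1}\|_{\mathrm{op}}=\lambda_{\min}[\sigma_D(\xi)]^{-1}\leq C^{-1}\langle\xi\rangle^{-k}$. Given $u\in\mathcal{D}'(G)$ with $Du=f\in C^\infty(G)$, applying the submultiplicative bound $\|AB\|_{HS}\leq \|A\|_{\mathrm{op}}\|B\|_{HS}$ to $\widehat{u}(\xi)=\sigma_D(\xi)^{-1}\widehat{f}(\xi)$ for $[\xi]\notin\Sigma$ yields
\begin{equation*}
    \|u\|_{H^s(G)}^2\;\leq\;K_\Sigma\;+\;C^{-2}\|f\|_{H^{s-k}(G)}^2\qquad\text{for every }s\in\mathbb{R},
\end{equation*}
where $K_\Sigma$ collects the finitely many exceptional terms. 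Since $f\in C^\infty(G)=\bigcap_s H^s(G)$ by \eqref{eqSobolevEmbed}, the right-hand side is finite for all $s$, and the same characterization then places $u$ in $C^\infty(G)$.

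For \emph{necessity} I would argue by contradiction. If \eqref{eq1} fails for every $C>0,\,k\in\mathbb{R}$, then for each $n\in\mathbb{N}$ there exist infinitely many $[\xi]$ with $\lambda_{\min}[\sigma_D(\xi)]<\langle\xi\rangle^{-n}$. Since $\{[\xi]:\langle\xi\rangle\leq R\}$ is finite for each $R$, I can extract pairwise distinct representations $[\xi_n]$ with $\langle\xi_n\rangle\geq 2^n$ and $\lambda_n:=\lambda_{\min}[\sigma_D(\xi_n)]<\langle\xi_n\rangle^{-n}$. Pick unit vectors $v_n\in\mathbb{C}^{d_{\xi_n}}$ realizing $\|\sigma_D(\xi_n)v_n\|=\lambda_n$ and arbitrary unit vectors $w_n$, and define a candidate distribution $u$ through its Fourier coefficients
\begin{equation*}
    \widehat{u}(\xi_n)=d_{\xi_n}^{-1/2}\,v_nw_n^{*},\qquad \widehat{u}(\xi)=0\ \text{ for }[\xi]\notin\{[\xi_n]:n\in\mathbb{N}\}.
\end{equation*}
Three direct Sobolev computations then close the argument: $\|u\|_{H^{-1}}^2=\sum_n\langle\xi_n\rangle^{-2}\leq\sum_n 4^{-n}<\infty$ gives $u\in\mathcal{D}'(G)$; $\|u\|_{L^2}^2=\sum_n 1=\infty$ shows $u\notin H^0(G)$ and hence $u\notin C^\infty(G)$; and since $\widehat{Du}(\xi_n)=d_{\xi_n}^{-1/2}(\sigma_D(\xi_n)v_n)w_n^{*}$ has $\|\widehat{Du}(\xi_n)\|_{HS}=d_{\xi_n}^{-1/2}\lambda_n$,
\begin{equation*}
    \|Du\|_{H^s(G)}^2=\sum_n\langle\xi_n\rangle^{2s}\lambda_n^2\leq\sum_n\langle\xi_n\rangle^{2(s-n)}<\infty\qquad\text{for every }s\in\mathbb{R},
\end{equation*}
using $\langle\xi_n\rangle\geq 2^n$. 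Thus $Du\in C^\infty(G)$ while $u\notin C^\infty(G)$, contradicting global hypoellipticity.

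The principal obstacle is engineering the counterexample: the rank-one shape $v_nw_n^{*}$ aligned with the minimum singular direction is what promotes the scalar bound $\lambda_{\min}$ into the sharp matrix identity $\|\sigma_D(\xi_n)\widehat{u}(\xi_n)\|_{HS}=d_{\xi_n}^{-1/2}\lambda_n$, while the normalization $d_{\xi_n}^{-1/2}$ is exactly what cancels the multiplicity factor $d_{\xi_n}$ in \eqref{eqsobonorm}-type sums and lands $u$ in $H^{-1}\setminus L^2$ irrespective of the growth of $d_{\xi_n}$. The rapid extraction $\langle\xi_n\rangle\geq 2^n$ then neutralises any Weyl-type polynomial factors when estimating $\|Du\|_{H^s(G)}$, so that no finer harmonic-analytic input is needed.
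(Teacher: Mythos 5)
Your proof is correct and follows essentially the same strategy as the paper: for sufficiency, bound $\|\widehat{u}(\xi)\|_{HS}$ by $\lambda_{\min}[\sigma_D(\xi)]^{-1}\|\widehat{Du}(\xi)\|_{HS}$ and pass to Sobolev norms, and for necessity, build a rank-one counterexample whose range is aligned with the minimal singular direction of $\sigma_D(\xi_n)$. The paper packages the first step as a short Hilbert--Schmidt lemma rather than via the inverse operator norm, and in the second step normalizes $\|\widehat{u}(\xi_k)\|_{HS}=1$ with an extra $2^{-k}$ factor in the decay of $\lambda_k$ (implicitly leaning on the Weyl bound for $d_\xi$), whereas your $d_{\xi_n}^{-1/2}$ normalization and $\langle\xi_n\rangle\geq 2^n$ extraction achieve the same convergence a touch more transparently.
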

Next, we extend the previous theorem to the setting of homogeneous operators (Fourier multipliers) on homogeneous vector bundles.
\begin{theorem}\label{theovector}
Let $p:E\to M$, $p:F\to M$, $M=G/K$ be homogeneous vector bundles, where $K<G$ are compact Lie groups, $E = G\times_\tau E_0$, $F = G\times_\omega F_0$. 
Let $D:\Gamma^\infty(E)\to \Gamma^\infty(F)$ be a continuous homogeneous operator (continuous $G$-invariant operator) from the smooth sections of $E$ to the smooth sections of $F$. For every $[\xi]\in \widehat{G}(E)$ 
set
    \begin{equation}
         m_{\xi}(D)^2 = \min\left\{\sum_{r=1}^{d_\omega}
         \left\|\sum_{i=1}^{d_\tau}
         \sigma_{{D}}(i,r,\xi)v(i,\xi)\right\|_{2}^2\bigg|\,v(i,\xi)\in \mathbb{C}^{d_\xi},\,1\leq i\leq d_\tau,\,\sum_{i=1}^{d_\tau}\|v(i,\xi)\|_2^2=1\right\}.
    \end{equation}
Then $D$ is globally hypoelliptic if and only if there exists $C>0$, $k\in\mathbb{R}$ such that
    \begin{equation}\label{eq3}
        m_\xi(D)\geq C \langle\xi\rangle^{k}
    \end{equation}
    for all but a finite number of $[\xi]\in \widehat{G}(E)$.
\end{theorem}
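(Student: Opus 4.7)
The strategy is to mimic the scalar argument of Theorem \ref{theoghcompact} after installing the correct multiplier identity on the Fourier side. Since $D$ is $G$-invariant, the induced operator from $C^\infty(G,E_0)^\tau$ to $C^\infty(G,F_0)^\omega$ is a vector-valued Fourier multiplier, so the symbol $\sigma_D(i,r,\xi)$ is $x$-independent and \eqref{bundlequantisation} combined with Peter--Weyl orthogonality yields
\begin{equation*}
   \widehat{\chi_\omega(Du)}(r,\xi) \,=\, \sum_{i=1}^{d_\tau} \sigma_D(i,r,\xi)\,\widehat{\chi_\tau u}(i,\xi), \qquad 1\leq r\leq d_\omega,\ [\xi]\in\widehat{G}(E).
\end{equation*}
This identity is the backbone of both directions. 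The key observation bridging $m_\xi(D)$ (defined via unit \emph{vector} tuples) and the matrix-valued Fourier coefficients of $u$ (valued in $\mathbb{C}^{d_\xi\times d_\xi}$) is the columnwise trick: applying the definition of $m_\xi(D)$ separately to the $j$-th column of each $\widehat{\chi_\tau u}(i,\xi)$ and summing over $j=1,\dots,d_\xi$ yields the Hilbert--Schmidt bound
\begin{equation*}
   \sum_{i=1}^{d_\tau}\|\widehat{\chi_\tau u}(i,\xi)\|_{HS}^{2} \,\leq\, m_\xi(D)^{-2}\sum_{r=1}^{d_\omega}\|\widehat{\chi_\omega f}(r,\xi)\|_{HS}^{2}, \qquad f\doteq Du.
\end{equation*}

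\textbf{Sufficiency.} Assume \eqref{eq3} holds outside a finite exceptional set $\mathcal{F}\subset\widehat{G}(E)$, and suppose $Du=f$ with $u\in\mathcal{D}'(E)$ and $f\in\Gamma^\infty(F)$. Insert the Hilbert--Schmidt bound above into \eqref{eqsobonorm2}, split the sum over $\widehat{G}(E)$ into the contributions from $\mathcal{F}$ (finite, since $u$ is already a distribution and $\mathcal{F}$ is finite) and from its complement, and apply \eqref{eq3} to get, for every $s\in\mathbb{R}$,
\begin{equation*}
   \|u\|_{H^s(E)}^2 \,\leq\, B_\mathcal{F}(u) + C^{-2}\|f\|_{H^{s-k}(F)}^{2},
\end{equation*}
with $B_\mathcal{F}(u)$ independent of $s$ on the tail. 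Since $f\in\bigcap_s H^s(F)$, this forces $u\in\bigcap_s H^s(E)=\Gamma^\infty(E)$ via \eqref{eqSobolevEmbedVect}.

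\textbf{Necessity (contrapositive).} Suppose \eqref{eq3} fails and build a witness to non-hypoellipticity. Failure produces a sequence of distinct classes $\{[\xi_n]\}\subset\widehat{G}(E)$ with $\langle\xi_n\rangle\to\infty$ and $m_{\xi_n}(D)<\langle\xi_n\rangle^{-n}$; choose, for each $n$, a unit minimising tuple $(v_n(i))_{i=1}^{d_\tau}$. Define $u\in\mathcal{D}'(E)$ by prescribing its Fourier coefficients: set $\widehat{\chi_\tau u}(i,\xi_n)$ to be the $d_{\xi_n}\times d_{\xi_n}$ matrix whose first column is $v_n(i)$ and whose remaining columns vanish, and $\widehat{\chi_\tau u}(i,\xi)=0$ otherwise. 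The Weyl estimate $\sum_{[\xi]} d_\xi^2\langle\xi\rangle^{-2t}<\infty$ for $t>\dim(G)/2$ guarantees that $\|u\|_{H^s(E)}^2=\sum_n d_{\xi_n}\langle\xi_n\rangle^{2s}$ is finite for sufficiently negative $s$, so $u\in\mathcal{D}'(E)$; but this same sum diverges for all large $s$ since $\langle\xi_n\rangle\to\infty$, so $u\notin\Gamma^\infty(E)$. Finally, the multiplier identity gives
\begin{equation*}
   \sum_{r=1}^{d_\omega}\|\widehat{\chi_\omega (Du)}(r,\xi_n)\|_{HS}^2 = \sum_{r=1}^{d_\omega}\left\|\sum_{i=1}^{d_\tau}\sigma_D(i,r,\xi_n)v_n(i)\right\|_2^2 < \langle\xi_n\rangle^{-2n},
\end{equation*}
so $\|Du\|_{H^s(F)}^2<\infty$ for every $s\in\mathbb{R}$ (again by Weyl), i.e.\ $Du\in\Gamma^\infty(F)$. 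This contradicts the assumed global hypoellipticity of $D$.

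\textbf{Where the work lies.} The scalar case is essentially the observation that a Fourier multiplier is invertible on each isotypic component with norm governed by $\lambda_{\min}[\sigma_D(\xi)]^{-1}$. The substantive step here is to identify the correct block-matrix picture: treat $\sigma_D(\cdot,\cdot,\xi)$ as a linear map $\bigoplus_{i=1}^{d_\tau}\mathbb{C}^{d_\xi}\to\bigoplus_{r=1}^{d_\omega}\mathbb{C}^{d_\xi}$, recognise $m_\xi(D)$ as its smallest singular value, and then pass from vectors to matrix-valued Fourier coefficients via the columnwise decomposition. Once this identification is in place the rest is bookkeeping on Sobolev norms following \cite{Kirilov_2020} and Theorem \ref{theoghcompact}. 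A minor subtlety is that all sums in both the hypothesis and the Sobolev norm must be restricted to $\widehat{G}(E)$, since $\widehat{\chi_\tau u}(i,\xi)$ vanishes identically outside this set.
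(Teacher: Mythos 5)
Your proposal is correct and follows essentially the same route as the paper's own proof: both directions hinge on the multiplier identity $\widehat{\chi_\omega(Du)}(r,\xi)=\sum_i\sigma_D(i,r,\xi)\widehat{\chi_\tau u}(i,\xi)$, the columnwise decomposition that converts the vector-tuple minimisation defining $m_\xi(D)$ into a Hilbert--Schmidt estimate, the Sobolev-scale bootstrap restricted to $\widehat{G}(E)$ for sufficiency, and the first-column witness construction combined with the Weyl counting estimate for necessity. Your remark that $m_\xi(D)$ is the smallest singular value of the block map $\bigoplus_i\mathbb{C}^{d_\xi}\to\bigoplus_r\mathbb{C}^{d_\xi}$ is a useful conceptual gloss that the paper leaves implicit, but it does not change the argument.
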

As a consequence, we obtain a similar characterization of globally hypoelliptic Fourier multipliers on compact homogeneous spaces. Here we are going to follow the following notation from Connolly \cite{ThesisConnolly}.  Let $M=G/K$ be a smooth homogeneous manifold, where $K<G$ are compact Lie groups. Set 
\begin{equation}\label{widehatM}
    \widehat{M} = \{[\xi]\in\widehat{G}|\Pi_M(\xi)\not\equiv 0\},
\end{equation}
where $\Pi_M:C^\infty(G)\to C^\infty(M)$ is the projection given by
\begin{equation*}
    (\Pi_M f)(gK) = \smallint_K f(gk)d\mu_G(k).
\end{equation*}
For each $[\xi]\in\widehat{M}$, let $d_\xi^K\in\mathbb{N}$ be the number of non-zero rows in the matrix coefficients of $\Pi_M\xi$, which we can assume all to be ordered first. Now we present the following criterion.

\begin{corollary}\label{corohomo}
Let $D:C^\infty(M)\to C^\infty(M)$ be a Fourier multiplier on $M$,  
and  $\tilde{D}$ its  projective lifting on $G$. 
Then $D$ is globally hypoelliptic if and only if there exists $C>0$, $k\in\mathbb{R}$ such that
    \begin{equation}\label{eq2}
        \lambda_{\min}[[\sigma_{\tilde{D}}(\xi)]_{d_{\xi}\times d_\xi^K}]\geq C \langle\xi\rangle^{k},
    \end{equation}
    where $[A]_{m\times n}$ denotes the upper left $m$ by $n$ block of the matrix $A$,
    for all but a finite number of $[\xi]\in\widehat{M}$.
\end{corollary}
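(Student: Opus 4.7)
The strategy is to deduce Corollary \ref{corohomo} from Theorem \ref{theovector} by specialising to the trivial line bundle $E = F = G \times_{\widehat{1}} \mathbb{C}$ over $M = G/K$. In this case $E_0 = F_0 = \mathbb{C}$, so $d_\tau = d_\omega = 1$ and $\Gamma^\infty(E) \cong C^\infty(M) \cong C^\infty(G)^K$, with $\chi_\tau$ reducing to the projective lifting $u \mapsto \dot u$; the vector-valued Fourier coefficient $\widehat{\chi_\tau u}(1,\xi)$ then coincides with the ordinary $\widehat{\dot u}(\xi)$, and the vector-bundle symbol $\sigma_D(1,1,\xi)$ agrees with the Ruzhansky--Turunen symbol $\sigma_{\tilde D}(\xi)$ of the projective lift $\tilde D$. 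One also checks that $\widehat{G}(E) = \widehat{M}$, since for the trivial bundle both sets coincide with $\{[\xi]:\widehat{\dot u}(\xi)\neq 0 \text{ for some } u\}$.

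The core computation is that of $m_\xi(D)$. Performing the change of variables $x\mapsto xk$ in $\widehat{\dot u}(\xi) = \int_G \dot u(x)\xi(x)^{*}dx$ and invoking the right-$K$-invariance of $\dot u$ gives
\begin{equation*}
\widehat{\dot u}(\xi) = \xi(k)^{*}\,\widehat{\dot u}(\xi), \qquad \forall\,k\in K,
\end{equation*}
so every column of $\widehat{\dot u}(\xi)$ lies in the $d_\xi^K$-dimensional $K$-fixed subspace of $\mathbb{C}^{d_\xi}$ under $\xi$. Choosing an orthonormal basis of $\mathbb{C}^{d_\xi}$ whose first $d_\xi^K$ vectors span this subspace (this is precisely the ordering convention the corollary adopts for the block $[\sigma_{\tilde D}(\xi)]_{d_\xi\times d_\xi^K}$), the Fourier columns of projective lifts are supported in $\mathbb{C}^{d_\xi^K}\!\oplus\!\{0\}\subset\mathbb{C}^{d_\xi}$. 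Restricting the minimisation in $m_\xi(D)$ to this admissible subspace then yields
\begin{equation*}
m_\xi(D)^{2} = \min\!\left\{\|\sigma_{\tilde D}(\xi)v\|_{2}^{2} : v\in\mathbb{C}^{d_\xi^K}\!\oplus\!\{0\},\ \|v\|_{2}=1\right\} = \lambda_{\min}[[\sigma_{\tilde D}(\xi)]_{d_\xi\times d_\xi^K}]^{2},
\end{equation*}
the second equality since restricting a linear map to the first $d_\xi^K$ coordinate subspace is equivalent to retaining only the first $d_\xi^K$ columns of its matrix. Theorem \ref{theovector} now delivers \eqref{eq2}.

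The main subtlety concerns the restriction in the domain of the minimum: as literally written, $v(i,\xi)$ ranges over all of $\mathbb{C}^{d_\xi}$, but the Sobolev norm \eqref{eqsobonorm2} only measures contributions from columns of genuine Fourier coefficients $\widehat{\chi_\tau u}(i,\xi)$, which are constrained by the $\tau$-equivariance (here, by $K$-right-invariance). Verifying that the proof of Theorem \ref{theovector} effectively works with the correspondingly restricted minimum is the one point that requires care; granted this, the identification with the block $[\sigma_{\tilde D}(\xi)]_{d_\xi\times d_\xi^K}$ and the statement of the corollary follow immediately.
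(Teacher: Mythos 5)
Your strategy---specialise Theorem~\ref{theovector} to the trivial line bundle $E=F=G\times_{\widehat{1}}\mathbb{C}$, identify $\widehat{G}(E)=\widehat{M}$, and translate $m_\xi(D)$ into $\lambda_{\min}$ of the $d_\xi\times d_\xi^K$ block---is the same route the paper takes, and your derivation of the constraint $\widehat{\dot u}(\xi)=\xi(k)^*\widehat{\dot u}(\xi)$ (hence columns of $\widehat{\dot u}(\xi)$ lying in the $K$-fixed subspace) is exactly the structural fact the paper records when it says the Fourier coefficients have at most the first $d_\xi^K$ rows nonzero.

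The subtlety you flag at the end is, however, not a detail one can wave through ``granted this'': it is precisely the reason the paper does \emph{not} invoke the statement of Theorem~\ref{theovector} verbatim. With the definition of $m_\xi(D)$ as written (minimum over \emph{all} unit vectors in $\mathbb{C}^{d_\xi}$), $m_\xi(D)=0$ whenever $d_\xi^K<d_\xi$, because the lifted symbol $\sigma_{\tilde D}(\xi)$ annihilates the complement of the $K$-fixed subspace. So a literal application of Theorem~\ref{theovector} to the trivial bundle would produce a vacuous criterion, not the one in Corollary~\ref{corohomo}. The paper's actual proof therefore re-runs the Sobolev estimate at the level of the block matrices: it writes $\sigma_{\tilde D}(\xi)\widehat{\dot u}(\xi)=[\sigma_{\tilde D}(\xi)]_{d_\xi\times d_\xi^K}[\widehat{\dot u}(\xi)]_{d_\xi^K\times d_\xi}$ and applies Lemma~\ref{lemmasingvalue} to the block $[\sigma_{\tilde D}(\xi)]_{d_\xi\times d_\xi^K}$ directly; and in the converse direction it chooses $v_k\in\mathbb{C}^{d_\xi^K}$ (padded with zeros) so that the resulting Fourier coefficients genuinely lift a distribution on $M$. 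To make your argument complete you should do the same: either restate $m_\xi(D)$ with the minimisation confined to the admissible ($K$-fixed) subspace and check that the proof of Theorem~\ref{theovector} still goes through, or---more economically---bypass $m_\xi(D)$ entirely and apply Lemma~\ref{lemmasingvalue} to $[\sigma_{\tilde D}(\xi)]_{d_\xi\times d_\xi^K}$ as the paper does.
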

\subsection{Proofs}
Now, we are going to present the proof of the previous results. 
We start with the following lemma:

\begin{lemma}\label{lemmasingvalue}
Let $A\in\mathbb{C}^{m\times n}$ and $B\in\mathbb{C}^{n\times p}$, where $m,n,p\in\N$, be two compatible matrices of complex entries. Then 
\begin{equation*}\label{ineqsingularvalue}
        \|AB\|_{HS}\geq \lambda_{\min}[A]\|B\|_{HS}.
    \end{equation*}
\end{lemma}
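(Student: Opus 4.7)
The plan is to reduce the matrix-level inequality to the elementary vector-level estimate $\|Av\|_2 \geq \lambda_{\min}[A]\,\|v\|_2$ by treating $B$ column by column.

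First, I would decompose $B = [b_1 \mid \cdots \mid b_p]$ into its columns $b_j \in \mathbb{C}^n$, so that $AB = [Ab_1 \mid \cdots \mid Ab_p]$. From the definition of the Hilbert-Schmidt norm,
\[
\|B\|_{HS}^2 = \sum_{j=1}^p \|b_j\|_2^2, \qquad \|AB\|_{HS}^2 = \sum_{j=1}^p \|Ab_j\|_2^2.
\]
This step is purely bookkeeping but it is what transfers the problem from matrices to vectors.

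Next, for each fixed $v \in \mathbb{C}^n$ I would establish $\|Av\|_2 \geq \lambda_{\min}[A]\,\|v\|_2$ via the singular value decomposition $A = U \Sigma V^{*}$ with $U, V$ unitary and $\Sigma$ containing the singular values of $A$ on its main diagonal. Since $U$ and $V^{*}$ preserve the Euclidean norm, it suffices to check the diagonal bound $\|\Sigma w\|_2 \geq \sigma_{\min}(\Sigma)\,\|w\|_2$ with $w = V^{*} v$, which is immediate from the entry-wise structure of $\Sigma$.

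Combining these two ingredients, summation over $j = 1, \ldots, p$ yields
\[
\|AB\|_{HS}^2 = \sum_{j=1}^p \|Ab_j\|_2^2 \;\geq\; \lambda_{\min}[A]^2 \sum_{j=1}^p \|b_j\|_2^2 \;=\; \lambda_{\min}[A]^2\,\|B\|_{HS}^2,
\]
and extracting square roots finishes the argument. There is essentially no obstacle here: the lemma is morally the statement that left multiplication by $A$ is bounded below on each column of $B$ by the smallest singular value, and the Hilbert-Schmidt norm merely aggregates these column-wise bounds quadratically.
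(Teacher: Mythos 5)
Your proof is correct, but it is organized differently from the paper's. You decompose $B$ into columns, reduce to the vector estimate $\|Av\|_2 \geq \lambda_{\min}[A]\,\|v\|_2$ via the full SVD $A = U\Sigma V^*$, and then sum. The paper instead stays entirely at the matrix level: it diagonalizes $A^*A = Q^*\Lambda Q$ by the spectral theorem (using only ``half'' the SVD), writes $\|AB\|_{HS}^2 = \Tr(B^*A^*AB) = \Tr\bigl((QB)^*\Lambda(QB)\bigr)$, lower-bounds the diagonal weight $\Lambda$ by $\lambda_{\min}[A]^2\,\mathrm{Id}$, and uses unitary invariance of the trace, $\Tr\bigl((QB)^*(QB)\bigr) = \Tr(B^*B)$. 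The two arguments are morally the same diagonalization idea; your column-wise version is a bit more elementary and explicit about where the inequality comes from (it localizes it to a single vector), while the paper's trace computation is more compact and avoids the bookkeeping of columns, at the price of requiring the identity $\Tr(C^*\Lambda C) \geq \lambda_{\min}\Tr(C^*C)$ for diagonal $\Lambda$, which is essentially your step hidden inside the trace. One small point to be aware of in either version: when $m < n$ the notion of ``smallest singular value'' must be understood as the smallest eigenvalue of $(A^*A)^{1/2}$ (equal to zero in that case) for the inequality $\|\Sigma w\|_2 \geq \sigma_{\min}\|w\|_2$ to hold; this is the same implicit convention the paper's proof relies on.
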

\begin{proof}
Since $A^*A$ is a normal square matrix, it follows from the spectral theorem that we can write  $A^*A=Q^*\Lambda Q$, where $\Lambda\in\mathbb{C}^{n\times n}$ is a diagonal matrix whose entries are given by the eigenvalues of $A^*A$, which correspond to the singular values of $A$ squared, and $Q\in\mathbb{C}^{n\times n}$ is unitary. Then 
\begin{align*}
    \|AB\|_{HS}^2&=\Tr(B^*A^*AB)\\
    &=\Tr(B^*Q^*\Lambda QB)\\
    &=\Tr((QB)^*\Lambda QB)\\
    &\geq \lambda_{\min}[A]^2\Tr((QB)^*QB)\\
    &=\lambda_{\min}[A]^2\Tr(B^*B)=\lambda_{\min}[A]^2\|B\|_{HS}^2,
\end{align*}    
which implies the claim.
\end{proof}
\begin{proof}[Proof of Theorem \ref{theoghcompact}]
    Suppose $D$ satisfies \eqref{eq1} for all $[\xi]\in V$, with $\widehat{G}\backslash V$ finite. Then, if $u\in \mathcal{D}'(G)$ is such that $Du\in C^\infty(G)$, we have for $s\in\R$ that
    \begin{align}\label{eqghcompact1}
        +\infty>\|Du\|_{H^s(G)}^2 \geq \sum_{[\xi]\in V}d_\xi\langle\xi\rangle^{2s}\|\widehat{Du}(\xi)\|_{HS}^2=  \sum_{[\xi]\in V}d_\xi\langle\xi\rangle^{2s}\|\sigma_D(\xi)\widehat{u}(\xi)\|_{HS}^2.
        \end{align}
        Applying Lemma \ref{lemmasingvalue} to this inequality yields
    \begin{align*}
        +\infty>\|Du\|_{H^s(G)}^2 &\geq \sum_{[\xi]\in V}d_\xi\langle\xi\rangle^{2s}\lambda_{\min}[\sigma_D(\xi)]^2\|\widehat{u}(\xi)\|_{HS}^2\\
        &\geq C^2\sum_{[\xi]\in V}d_\xi\langle\xi\rangle^{2(s+k)}\|\widehat{u}(\xi)\|_{HS}^2.
    \end{align*}
    Since $\widehat{G}\backslash V$ is finite, we conclude that $\|u\|_{H^{s+k}(G)}<+\infty$ also. Since this holds for any $s\in\mathbb{R}$, we conclude, by \eqref{eqSobolevEmbed}, that $u\in C^\infty(G)$ and thus $D$ is globally hypoelliptic. Suppose now that $D$ does not satisfy \eqref{eq1}. Thus, for each $k\in\mathbb{N}$, there exists distinct $\xi_k\in\widehat{G}$, 
    $v_k\in \mathbb{C}^{d_{\xi_k}}$, $\|v_k\|_2=1$, $\lambda_k\geq 0$ such that
    \begin{equation*}
   \sigma_D(\xi_k)^*\sigma_D(\xi_k)v_k=\lambda_k^2 v_k,
    \end{equation*}
    and
    \begin{equation*}
        \lambda_k<2^{-k}\langle \xi_k\rangle^{-k},
    \end{equation*}
    for every $k\in \N$. Let $u\in \mathcal{D}'(G)$ be defined by the Fourier coefficients
    \begin{align*}
        \widehat{u}(\xi) = 
\begin{bmatrix}
    \vert & \vert &\dots&\vert\\
    v_k   & 0 &\dots&0  \\
    \vert & \vert&\dots&\vert
\end{bmatrix},
    \end{align*}
    if $\xi=\xi_k$, and $ \widehat{u}(\xi) =0$ otherwise.
    Then $u \in \mathcal{D}'(G)\backslash C^\infty(G)$ clearly, as $\|\widehat{u}(\xi_k)\|_{HS}=1$, for all $k\in\N$, but on the other hand
    \begin{align*}
        \|\widehat{Du(\xi_k)}\|_{HS} = \|\sigma_D(\xi_k)\widehat{u}(\xi_k)\|_{HS} = \lambda_k\|\widehat{u}(\xi_k)\|_{HS}<2^{-k}\langle \xi_k\rangle^{-k},
    \end{align*}
    for all $k\in\N$ and $\|\widehat{Du}(\xi)\|_{HS}=0$ for all other $[\xi]\in\widehat{G}$, 
    therefore $Du\in C^\infty(G)$ and so $D$ is not globally hypoelliptic.
\end{proof}
\begin{remark}
    The proof of Theorem \ref{theovector} is similar to the one of Theorem \ref{theoghcompact}, but there are some additional facts to take in consideration for the vector-valued setting, as well as the need to consider the restriction to the set $\widehat{G}(E)$. 
\end{remark}

\begin{proof}[Proof of Theorem \ref{theovector}]
    Indeed, suppose $D$ satisfies the inequality over $V\subset\widehat{G}(E)$. Then for $u\in \mathcal{D}'(E)$ such that $Du\in \Gamma^\infty(E)$, and for any $s\in\mathbb{R}$:
    \begin{align}\label{ineqprooftheovector1}
        +\infty>\|Du\|^2_{H^s(F)} &= \sum_{[\xi]\in\widehat{G}(F)}d_\xi\langle\xi\rangle^{2s} \sum_{r=1}^{d_\omega}\|\widehat{\chi_\omega Du}(r,\xi)\|_{HS}^2\\
        &=\sum_{[\xi]\in\widehat{G}(F)}d_\xi\langle\xi\rangle^{2s}\sum_{r=1}^{d_\omega}\left\|\sum_{i=1}^{d_\tau }\sigma_D(i,r,\xi)\widehat{\chi_\tau u}(i,\xi)\right\|_{HS}^2,\notag
    \end{align}
    where we have used the quantisation formulas \eqref{vectorquantisation} and \eqref{bundlequantisation}.
        But then, 
        \begin{align*}
           \sum_{r=1}^{d_\omega} \left\|\sum_{i=1}^{d_{\tau}}\sigma_{D}(i,r,\xi)\widehat{\chi_\tau u}(i,\xi)\right\|_{HS}^2\geq 
           m_{\xi}(D)^2\sum_{i=1}^{d_\tau}\|\widehat{\chi_\tau u}(i,\xi)\|_{HS}^2.
        \end{align*}
        Indeed, note that for $1\leq r\leq d_\omega$, $\|\sum_{i=1}^{d_{\tau}}\sigma_{D}(i,r,\xi)\widehat{\chi_\tau u}(i,\xi)\|_{HS}^2$ is equal to  $\sum_{n=1}^{d_\xi}\|\sum_{i=1}^{d_\tau}\sigma_D(i,r,\xi)w(i,n,\xi)\|_2^2$, where $w(i,n,\xi)$ is the $n$-th column of $\widehat{\chi_\tau u}(i,\xi)$. But 
        \begin{equation*}
        \sum_{r=1}^{d_\omega}\left\|\sum_{i=1}^{d_\tau}\sigma_D(i,r,\xi)\frac{w(i,n,\xi)}{\sum_{j=1}^{d_\tau}\|w(j,n,\xi)\|_2}\right\|_2^2\geq m_\xi(D)^2,
        \end{equation*}
        by definition, so
        \begin{equation*}
            \sum_{r=1}^{d_\omega} \left\|\sum_{i=1}^{d_{\tau}}\sigma_{D}(i,r,\xi)\widehat{\chi_\tau u}(i,\xi)\right\|_{HS}^2= \sum_{r=1}^{d_\omega}\sum_{n=1}^{d_\xi} \left\|\sum_{i=1}^{d_{\tau}}\sigma_{D}(i,r,\xi){w}(i,n,\xi)\right\|_{2}^2\geq m_\xi(D)^2\sum_{j=1}^{d_\tau}\left\|\widehat{\chi_\tau u}(j,\xi)\right\|_{HS}^2,
        \end{equation*}
        since 
        \begin{equation*}
\sum_{n=1}^{d_\xi} \sum_{j=1}^{d_{\tau}}\left\|{w}(j,n,\xi)\right\|_{2}^2= \sum_{j=1}^{d_\tau}\left\|\widehat{\chi_\tau u}(j,\xi)\right\|_{HS}^2.
        \end{equation*}
        Hence, \eqref{ineqprooftheovector1} implies that
    \begin{align*}    
         +\infty>\|Du\|^2_{H^s(F)}&\geq\sum_{[\xi]\in V} d_\xi\langle\xi\rangle^{2s} m_\xi(D)^2\sum_{i=1}^{d_\tau}\|\widehat{\chi_\tau u}(i,\xi)\|_{HS}^2\\
        & \geq C^2\sum_{[\xi]\in V}d_\xi\langle\xi\rangle^{2(s+k)}\sum_{i=1}^{d_\tau}\|\widehat{\chi_\tau u}(i,\xi)\|_{HS}^2.
    \end{align*}
    Since $\widehat{G}(E)\backslash V$ is finite, we conclude that $\|u\|_{H^{s+k}(E)}<+\infty$ also. Since this holds for any $s\in\mathbb{R}$, we conclude by \eqref{eqSobolevEmbedVect} that $u\in\Gamma^\infty(E)$ and thus $D$ is globally hypoelliptic. 
     Suppose now that $D$ does not satisfy \eqref{eq3}. Then, in particular, for each $k\in\mathbb{N}$, there exists distinct, $\xi_k\in\widehat{G}(E),\,  
    v_k(i)\in \mathbb{C}^{d_{\xi_k}}$, $1\leq i\leq d_\tau$, $\sum_{i=1}^{d_\tau}\|v_k(i)\|^2_2=1$, $\lambda_k
    \geq0$ such that
    \begin{equation*}
    \sum_{r=1}^{d_\omega}\left\|\sum_{i=1}^{d_\tau}
  [\sigma_{{D}}(i,r,\xi_k)]v_k(i)\right\|_{2}^2=\lambda_k^2,
    \end{equation*}
    and
    \begin{equation*}
        \lambda_k^2<2^{-k}\langle \xi_k\rangle^{-k},
    \end{equation*}
    for every $k\in \N$. Let $u\in \mathcal{D}'(E)$ be defined by the Fourier coefficients
    \begin{align*}
        \widehat{\chi_{\tau}u}(i,\xi) = 
\begin{bmatrix}
    \vert & \vert &\dots&\vert\\
    v_k(i)   & 0 &\dots&0  \\
    \vert&\vert&\dots&\vert
\end{bmatrix},
    \end{align*}
    for $i=1,\dots d_\tau$, if $\xi=\xi_k$, and $\widehat{\chi_\tau u}(i,\xi)=0$ otherwise. Then $u\in\mathcal{D}'(E)\backslash \Gamma^\infty(E)$ clearly, as $\sum_{i=1}^{d_\tau}\|\widehat{\chi_\tau u}(i,\xi_k)\|_{HS}^2=1$, for all $k\in\mathbb{N}$, and $\widehat{\chi_\tau u}(i,\xi)=0$ for all other $[\xi]\in \widehat{G}(E)$, and also $\widehat{\chi_\tau u}(i,\xi)=0$, for all $[\xi]\not\in\widehat{G}(E)$, but on the other hand
      \begin{align*}
        \sum_{r=1}^{d_\omega}\|\widehat{\chi_{\tau}{Du}}(r,\xi_k)\|_{HS}^2=\sum_{r=1}^{d_\omega}\left\|\sum_{i=1}^{d_\omega}\sigma_{{D}}(i,r,\xi_k)\widehat{\chi_{\tau}u}(i,\xi_k)\right\|_{HS}^2 = \lambda_k^2<2^{-k}\langle \xi_k\rangle^{-k},
    \end{align*}
    for all $k\in\N$, and $\|\widehat{\chi_{\tau}{Du}}(r,\xi)\|_{HS}=0$ for all other $1\leq r\leq d_\omega$, $[\xi]\in\widehat{G}$, 
    therefore $Du\in \Gamma^\infty(F)$ and so $D$ is not globally hypoelliptic.
\end{proof}
\begin{remark}
    As before, the proof of Corollary \ref{corohomo} is also similar to the proof of \ref{theoghcompact}, but now extra care needs to be taken to discard the frequencies which are not relevant for the lifting of operators on the homogeneous space.
\end{remark}
\begin{proof}[Proof of Corollary \ref{corohomo}]
First, notice that smooth functions on $M$ may be seen as smooth sections on the trivial bundle $p:E= G\times_{\widehat{1}}\mathbb{C}\to M$, and so by the remarks given in Section 2, this result follows from the Theorem \ref{theovector}. More precisely, note that the Sobolev norms for $u\in H^s(M)$ are given by:
\begin{equation*}
    \|u\|_{H^s(M)}^2 = \sum_{[\xi]\in\widehat{M}}{d_\xi}\langle\xi\rangle^s\|\widehat{\dot{u}}(\xi)\|^2_{HS},
\end{equation*}
where $\dot{u}$ is the projective lifting of $u$ on $G$, for $s\in\R$ as $\widehat{G}(E)=\widehat{M}$. Also, for $u\in\mathcal{D}'(M)$, its symbol at $[\xi]\in\widehat{M}$ has possibly only the first $d_\xi^K$ lines different from zero. Similarly, the projective lifting of $D$, $\tilde{D}$, has possibly only the first $d_{\xi}^K$ columns different from $0$. Hence, it follows from Lemma \ref{lemmasingvalue} that
\begin{align*}
     \|\widehat{\dot{Du}}(\xi)\|_{HS}^2 =\|\sigma_{\tilde{D}}(\xi)\widehat{\dot{u}}(\xi)\|_{HS}^2 &=  \|[\sigma_{\tilde{D}}(\xi)\widehat{\dot{u}}(\xi)]_{d_\xi^K\times d_\xi}\|_{HS}^2\\
    &\geq\lambda_{\min}[[\sigma_{\tilde{D}}(\xi)]_{d_{\xi}\times d_\xi^K}]^2\|\widehat{\dot{u}}(\xi)_{d_\xi^K\times d_\xi}\|_{HS}^2.
\end{align*}
 Suppose $D$ satisfies \eqref{eq2} for all $[\xi]\in V$, $\widehat{M}\backslash V$ finite. Then, if $u\in \mathcal{D}'(M)$ is such that $Du\in C^\infty(M)$, we have for $s\in\R$ that
    \begin{align*}
        +\infty>\|Du\|_{H^s(M)}^2 &= \sum_{[\xi]\in\widehat{M}}d_\xi\langle\xi\rangle^{2s}\|\widehat{\dot{Du}}(\xi)\|_{HS}^2\geq\sum_{[\xi]\in V}d_\xi\langle\xi\rangle^{2s}\|\widehat{\dot{Du}}(\xi)\|_{HS}^2\\
        &\geq \sum_{[\xi]\in V}d_\xi\langle\xi\rangle^{2s}\lambda_{\min}[[\sigma_{\tilde{D}}(\xi)]_{d_{\xi}\times d_\xi^K}]^2\|\widehat{\dot{u}}(\xi)\|_{HS}^2\\
        &\geq C^2\sum_{[\xi]\in V}d_\xi\langle\xi\rangle^{2(s+k)}\|\widehat{\dot{u}}(\xi)\|_{HS}^2.
    \end{align*}
    Since $\widehat{M}\backslash V$ is finite, we conclude that $\|u\|_{H^{s+k}(M)}<+\infty$ also. Since this holds for any $s\in\mathbb{R}$, we conclude by \eqref{eqSobolevEmbedVect} that $u\in C^\infty(M)$ and thus $D$ is globally hypoelliptic. Suppose now that $D$ does not satisfy \eqref{eq2}. Then, for each $k\in\mathbb{N}$, there exist distinct $\xi_k\in\widehat{M}$, 
    $v_k\in \mathbb{C}^{d_{\xi_k}^K}$, $\|v_k\|_2=1$, $\lambda_k\geq0$ such that
    \begin{equation*}
  [\sigma_{\tilde{D}}(\xi)]_{d_\xi^K\times d_\xi}^*[\sigma_{\tilde{D}}(\xi)]_{d_{\xi}\times d_\xi^K}v_k=\lambda_k^2v_k,
    \end{equation*}
    and
    \begin{equation*}
        \lambda_k<2^{-k}\langle \xi_k\rangle^{-k}.
    \end{equation*}
  Let $u\in \mathcal{D}'(M)$ be defined by the Fourier coefficients
    \begin{align*}
        \widehat{\dot{u}}(\xi) = 
\begin{bmatrix}
    \vert & \vert &\dots&\vert\\
    v_k   & 0 &\dots&0  \\
    \vert & \vert&\dots&\vert\\
    0_{d_\xi-d_\xi^K}&0_{d_\xi-d_\xi^K}&\dots&0_{d_\xi-d_\xi^K}
\end{bmatrix},
    \end{align*}
    if $\xi=\xi_k$, and $ \widehat{u}(\xi) =0$ otherwise.
    Then $u \in \mathcal{D}'(M)\backslash C^\infty(M)$ clearly, as $\|\widehat{\dot{u}}(\xi_k)\|_{HS}=1$, for all $k\in\N$, also, $\widehat{u}(\xi)=0$ for $[\xi]\not\in\widehat{M}$ and only the first $d_{\xi}^K$ lines of $\widehat{u}(\xi)$ are possibly different from zero. But on the other hand
    \begin{align*}
        \|\widehat{\dot{Du}}(\xi_k)\|_{HS} = \|\sigma_{\tilde{D}}(\xi_k)\widehat{\dot{u}}(\xi_k)\|_{HS} = \lambda_k\|\widehat{\dot{u}}(\xi_k)\|_{HS}<2^{-k}\langle \xi_k\rangle^{-k},
    \end{align*}
    for all $k\in\N$ and $\|\widehat{\dot{Du}}(\xi)\|_{HS}=0$ for all other $[\xi]\in\widehat{M}$, 
    therefore $Du\in C^\infty(M)$ and so $D$ is not globally hypoelliptic.
\end{proof}
\subsection{Remarks and examples}
We illustrate our main results in the following examples:
\begin{example}\label{examplediag}
    Notice that if $D$ is a left-invariant pseudo-differential operator on a compact Lie group $G$ with diagonal symbol, e.g. $D$ is given by $cX+d$, where $X$ is a left-invariant vector field on $G$, $c,d\in\mathbb{C}$, the inequality in \eqref{eq1} of Theorem \ref{theoghcompact} can be rewritten as
\begin{equation*}
    |\sigma_D(\xi)_{jj}|\geq C\langle\xi\rangle^k
\end{equation*}
for some $C>0$, $k\in\mathbb{R}$ and all but finitely many $1\leq j\leq d_{\xi},\,[\xi]\in\widehat{G}$. 
\end{example}

\begin{example}
    Let $c\in\mathbb{C}$ and consider $D = \partial_0+c$, a perturbation of the neutral operator $\partial_0$ on the compact Lie group $SU(2)$. Under the usual identification $\widehat{SU(2)}\sim \frac{1}{2}\N_0$, its symbol is given by:
    \begin{equation*}
       \sigma_{\partial_0+c}(\ell)_{ij}=(c+i)\delta_{ij}
    \end{equation*}
    where $-\ell\leq i,j\leq \ell,\,\ell-i,\ell-j\in \mathbb{Z}$, 
    and so if $c\in\frac{1}{2}\Z$ we have that, 
    \begin{equation*}
        \lambda_{\min}[\sigma_{\partial_0+c}] = 0
    \end{equation*}
    for infinitely many $\ell\in\frac{1}{2}\N_0$ and so it does not satisfy inequality \eqref{eq1}. In this case, Theorem \ref{theoghcompact} implies that $\partial_0+c$ is not globally hypoelliptic. On the other hand, if $c\not\in\frac{1}{2}\Z$, it is possible to show that
    \begin{equation*}
         |\sigma_D(\xi)_{jj}|\geq \epsilon
    \end{equation*}
    for some $\epsilon>0$, therefore by the same theorem and Example \ref{examplediag} we conclude that in this case $\partial_0+c$ is globally hypoelliptic.
\end{example}

\begin{example}
    Let $G$ be a compact Lie group and $\mathcal{L}$ a sub-Laplacian satisfying the H\"ormander condition with step $\kappa$. It is negative definite formally self-adjoint, and so by choosing an appropriate basis for the representation spaces of $\widehat{G}$, its symbol is diagonal and given by 
    \begin{equation*}
        \sigma_{\mathcal{L}}(\xi)=\text{diag}\left(-\nu_1^2,\dots,-\nu_{d_\xi}^2\right),
    \end{equation*}
    for some $\nu_{j}(\xi)\geq 0$ and for all $[\xi]\in\widehat{G}$. It follows from the functional calculus, that for any $s\in\mathbb{R}$ the symbol of the operator $(\text{Id}-\mathcal{L})^{\frac{s}{2}}$ is given by
    \begin{equation*}
        \sigma_{(\text{Id}-\mathcal{L})^{\frac{s}{2}}}(\xi)=\text{diag}\left((1+\nu_1^2)^{\frac{s}{2}},\dots,(1+\nu_{d_\xi}^2)^{\frac{s}{2}}\right),
    \end{equation*}
    for all $[\xi]\in\widehat{G}$.
   As proved in Proposition 3.1 of \cite{Garetto_2015} there exist $C_1,C_2>0$ such that
    \begin{equation*}
       C_1\langle\xi\rangle\geq (1+\nu_{j}(\xi)^2)^{\frac{1}{2}}\geq C_2\langle\xi\rangle^{\frac{1}{\kappa}},
    \end{equation*}
    for all $1\leq j\leq d_\xi$, $[\xi]\in\widehat{G}$. It follows from Theorem \ref{theoghcompact} and Example \ref{examplediag} that the operator $(\text{Id}-\mathcal{L})^{\frac{s}{2}}$ is globally hypoelliptic for any $s\in\mathbb{R}$.
\end{example}

\begin{example}
    Let $G$ be a compact Lie group, $\mathcal{L}$ a subelliptic operator such that
    \begin{equation}\label{subellineq}
        \|u\|_{H^{\frac{2}{r}}(G)}\leq C\left(\|\mathcal{L}u\|_{L^2(G)}+\|u\|_{L^2(G)}\right).
    \end{equation}
    Theorem \ref{theoghcompact} provides an alternative proof of the fact that $\mathcal{L}$ is globally hypoelliptic. Indeed, given $[\xi]\in\widehat{G}$, let $v_\xi\in \mathbb{C}^{d_\xi}$ be a unit vector such that 
    \begin{equation}
        \sigma_{\mathcal{L}}(\xi)^*\sigma_{\mathcal{L}}(\xi)v_\xi=\lambda_{\min}[\sigma_{\mathcal{L}}(\xi)]^2v_\xi.
    \end{equation}
    Let $u\in\mathcal{D}'(G)$ be defined by the Fourier coefficients
    \begin{equation*}
 \widehat{u}(\xi)=\begin{bmatrix}
    \vert & \vert &\dots&\vert\\
    v_\xi   & 0 &\dots&0  \\
    \vert&\vert&\dots&\vert
\end{bmatrix},
 \end{equation*}
 and $\widehat{u}(\xi')=  0$ for all other $[\xi']\in\widehat{G}$.
 Then by \eqref{subellineq} and Plancherel's Theorem 
    \begin{align*}
        d_\xi\langle \xi\rangle^{\frac{2}{r}}&\leq C(d_\xi\|\sigma_{\mathcal{L}}(\xi)\widehat{u}(\xi)\|_{HS}^2+d_\xi)\\
        &\leq C(d_\xi
        \lambda_{\min}[\sigma_{\mathcal{L}}(\xi)]^2+d_\xi)
    \end{align*}
    which implies 
    \begin{equation*}
         \lambda_{\min}[\sigma_{\mathcal{L}}(\xi)]\geq \sqrt{\frac{1}{C}\langle\xi\rangle^{\frac{2}{r}}-1}.
    \end{equation*}
    Since $[\xi]\in\widehat{G}$ is arbitrary, we conclude this holds for every $[\xi]\in\widehat{G}$. Therefore, for all but finitely many $[\xi]\in\widehat{G}$, we have the estimate
    \begin{equation*}
        \lambda_{\min}[\sigma_{\mathcal{L}}(\xi)]\gg \langle\xi\rangle^{\frac{1}{r}},
    \end{equation*}
    and so by Theorem \ref{theoghcompact} we conclude $\mathcal{L}$ is globally hypoelliptic. Note that a typical example of an operator $\mathcal{L}$ satisfying the properties above is an arbitrary H\"ormander sub-Laplacian \cite{Hormander:1967}. In this case we refer to Rothschild and Stein \cite{RothschildStein76}  for the validity of the subelliptic estimate  \eqref{subellineq}.
\end{example}
\begin{example}
    Consider the compact homogeneous space $M = \mathbb{S}^2 = SU(2)/\mathbb{T}^1$, where $K=\mathbb{T}^1$ is the (maximal torus) subgroup of $SU(2)$ given in Euler angles by

    \begin{equation*}
        \left\{\begin{pmatrix}
            e^{i\psi/2}&ie^{-i\psi/2}\\ie^{i\psi/2}&e^{-i\psi/2}
        \end{pmatrix}:\psi\in[-2\pi,2\pi)\right\}.
    \end{equation*}
    In this case, note that the only $K$ invariant elements of $\widehat{SU(2)}$ are given by $\mathfrak{t}^\ell_{n0}$, $\ell\in\N_0$, and indeed $\Pi_{\mathbb{S}^2}\mathfrak{t}^\ell\equiv0$ (i.e.:$\Pi_{\mathbb{S}^2}\mathfrak{t}^\ell_{nm}\equiv0,$ for all $-\ell\leq m,n\leq \ell$) if $\ell\in\frac{1}{2}\mathbb{N}_0\backslash\mathbb{N}_0$. Therefore, when $M=\mathbb{S}^2$ in \eqref{widehatM}, $\widehat{\mathbb{S}^2}$ can be parametrized by $\N_0$ and the Fourier inversion formula can be written (in the usual notation) as
    \begin{equation*}
        f(\phi,\theta) = \sum_{\ell\in\N_0}(2\ell+1)\sum_{-\ell\leq n \leq\ell}\mathfrak{t}^\ell_{n0}(\phi,\theta,0)\widehat{\dot{f}}(\ell)_{0n},
    \end{equation*}
    where $\dot{f}(\phi,\theta,\psi) = f(\phi,\theta)$ for $f\in C^\infty(\mathbb{S}^2)$,  $0\leq\phi<2\pi,\,0\leq\theta\leq \pi$ and the sum ranges over $n-\ell\in\mathbb{Z}$.\\
    Hence, by Corollary \ref{corohomo}, a homogeneous operator $D$ on $\mathbb{S}^2$ is globally hypoelliptic if and only if it satisfies 
    \begin{equation*}
        \lambda_{\min}[\text{middle column of }\sigma_{\tilde{D}}(\xi)]=\sqrt{\sum_{-\ell\leq n\leq \ell}|\sigma_{\tilde{D}}(\ell)_{n0}|^2}\geq C (1+\ell)^{k},
    \end{equation*}
    for some $C>0$, $k\in\mathbb{R}$ and all but finitely many $\ell\in\N_0$.

    Taking for instance the Laplacian on the sphere $\Delta_{\mathbb{S}^2}$, then its symbol, in this notation, is given by
    \begin{equation*}
        \sigma_{\tilde{\Delta_{{\mathbb{S}^2}}}}(\ell) =\text{diag}\big(0,\dots,0,\ell^2+\ell,0,\dots,0\big)
    \end{equation*}
    so the inequality above can be re-written as 
    \begin{equation*}
        \ell^2+\ell\geq C(1+\ell)^{k},
    \end{equation*}
    which is indeed true for $C=k=1$, $\forall\ell\in\N_0,\,\ell \neq 0$.

\end{example}
\subsection{Acknowledgement} The authors would like to thank Alexandre Kirilov and Wagner A. Almeida de Moraes for discussions. The results below were presented by the second author at the {\it Ghent Analysis and PDE seminar} and were announced in \cite{CardonaKowaczNote}.
\subsection*{Conflict of interests statement.}   On behalf of all authors, the corresponding author states that there is no conflict of interest.

\subsection*{Data Availability Statements.}  Data sharing not applicable to this article as no datasets were generated or
analysed during the current study.

\end{document}